\documentclass[12pt,reqno]{amsart}
   \topmargin=0in
   \oddsidemargin=0in
   \evensidemargin=0in
   \textwidth=6.6in
   \textheight=7.0in

\usepackage{amsfonts}
\usepackage{amssymb,amsmath,amsthm, enumerate}
\usepackage{latexsym}
\usepackage{fullpage}
\usepackage{graphicx}
\usepackage{pkgfile}
\newtheorem{theorem}{Theorem}[section]

\newtheorem{definition}[theorem]{Definition}

\newtheorem{lemma}[theorem]{Lemma}
\newtheorem{coro}[theorem]{Corollary}

\theoremstyle{definition}
\newtheorem{remark}[theorem]{Remark}

\newcounter{tenumerate}

\def\P{\mathbb{P}}

\newcommand{\deq}{\stackrel{\scriptscriptstyle\triangle}{=}}

\renewcommand{\epsilon}{\varepsilon}

\DeclareMathOperator{\var}{Var} 
\newcommand{\remove}[1]{}
\renewcommand{\le}{\leqslant}
\renewcommand{\ge}{\geqslant}
\renewcommand{\leq}{\leqslant}
\renewcommand{\geq}{\geqslant}

\def\XXint#1#2#3{{\setbox0=\hbox{$#1{#2#3}{\int}$}
\vcenter{\hbox{$#2#3$}}\kern-.5\wd0}}

\begin{document}
\title{On level sets of Gaussian fields}
\author[S.\ Chatterjee]{Sourav Chatterjee$^*$}
\thanks{${}^*$Research partially supported by NSF grant DMS-1005312.}
\author[A.\ Dembo]{Amir Dembo$^\dagger$}\thanks{${}^\dagger$Research partially supported by NSF grant DMS-1106627.}
\author[J.\ Ding]{Jian Ding$^\ddagger$}\thanks{${}^\ddagger$Research partially supported by NSF grant DMS-1313596.} 

\address{$^*$ Stanford University, 
\newline\indent Stanford, California 94305}
\address{$^\dagger$ Stanford University, 
\newline\indent Stanford, California 94305}
\address{$^{\ddagger}$ University of Chicago,
\newline\indent Chicago, Illinois 60637}

\date{\today}

\subjclass[2010]{60G15, 60G70}

\keywords{Gaussian fields, extreme values, 
multiple valleys}

\maketitle

\maketitle

\begin{abstract}
In this short note, we present a theorem concerning certain ``additive structure'' for the level sets of non-degenerate Gaussian fields, which yields the multiple valley phenomenon for extremal
fields with exponentially many valleys.
\end{abstract}

\section{Introduction}
In this note we study the asymptotics of 
extreme values of \emph{mean zero} Gaussian fields.
To this end, a sequence of mean zero Gaussian fields $\{\eta_{n, v}: v\in V_n\}$, is called \emph{non-degenerate} if 
\beq
\lim_{n\to \infty} \frac{1}{\sigma_n} \E [\sup_{v\in V_n} \{\eta_{n, v}\}]  = \infty \mbox{ where } \sigma_n^2 \deq \sup_{v\in V_n}\{ \E [\eta_{n, v}^2]\}\,.
\eeq
A key role is played here by the functionals $g_n : 2^{V_n} \mapsto \mathbb{R}$ such that
\beq
g_n(S) = \E [\sup_{v\in S} \{\eta_{n, v}\}]  
\quad \mbox{ for all } \quad S \subseteq V_n\,.
\eeq
That is, $(g_n)$ are \emph{deterministic} functionals defined with respect to the laws of a specific sequence 
of Gaussian fields under consideration, with 
$g_n(S_n)=g_n(S_n(\omega))$ denoting the corresponding random variables in case of random sets $S_n(\omega)$.
Our main result is the following theorem revealing
the asymptotic additive structure of $g_n^2(\cdot)$ 
with respect to level sets of such fields.
\begin{theorem}\label{thm-1}
For any sequence of non-degenerate Gaussian fields $\{\eta_{n, v}: v\in V_n\}$ and any fixed number $0<\alpha<1$, define the level set
\beq\label{eq-level-size}
U_{n, \alpha} = \{v\in V_n: \eta_{n, v} \geq \alpha g_n(V_n)\}\,.
\eeq
Then, 
\beq\label{eq:sup-level}
\frac{g_n(U_{n, \alpha})}{g_n(V_n)} \to \sqrt{1 - \alpha^2}
\mbox{ in probability, when } n \to \infty.
\eeq 
\end{theorem}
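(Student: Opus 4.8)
Throughout write $m_n:=g_n(V_n)$ and recall that non-degeneracy is exactly the statement $\sigma_n=o(m_n)$. The engine is the Borell--TIS inequality: for any \emph{fixed} $S\subseteq V_n$ the variable $\sup_{v\in S}\eta_{n,v}$ concentrates around its mean $g_n(S)$ at scale $\sigma_n=o(m_n)$, which is negligible on the scale $m_n$. I also use an independent copy $\{\eta'_{n,v}\}$ of the field and the rotation $\tilde\eta_{n,v}:=\alpha\eta_{n,v}+\sqrt{1-\alpha^2}\,\eta'_{n,v}$, which has exactly the same law as $\{\eta_{n,v}\}$, so $\E[\sup_v\tilde\eta_{n,v}]=m_n$. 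Since $U_{n,\alpha}$ is $\eta$-measurable, conditionally on $\eta$ the quantity $\sup_{v\in U_{n,\alpha}}\eta'_{n,v}$ is a supremum of a fresh field over a fixed set and hence concentrates around $g_n(U_{n,\alpha})$.

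\emph{Upper bound.} If $w$ maximizes $\eta'$ over $U_{n,\alpha}$ then $\eta_{n,w}\ge\alpha m_n$, giving the deterministic bound $\sup_v\tilde\eta_{n,v}\ge\tilde\eta_{n,w}\ge\alpha^2 m_n+\sqrt{1-\alpha^2}\sup_{U_{n,\alpha}}\eta'$. Replacing $\sup_v\tilde\eta$ by its typical value $m_n$ and $\sup_{U_{n,\alpha}}\eta'$ by $g_n(U_{n,\alpha})$ via Borell--TIS gives $g_n(U_{n,\alpha})\le\sqrt{1-\alpha^2}\,m_n(1+o(1))$ with probability $\to1$. The same computation at an arbitrary level produces the uniform family $g_n(U_{n,\gamma})\le\sqrt{1-\gamma^2}\,m_n(1+o(1))$ (uniformity over a compact range of $\gamma$ following from finitely many levels plus monotonicity of $\gamma\mapsto g_n(U_{n,\gamma})$), which I feed into the lower bound.

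\emph{Lower bound.} Fix small $\delta>0$ and rotate with the slightly larger angle $\alpha':=\alpha+\delta$, letting $v^\ast$ maximize $\tilde\eta^{\alpha'}:=\alpha'\eta+\sqrt{1-\alpha'^2}\eta'$. The plan is to show $\eta_{n,v^\ast}/m_n\to\alpha'$. For the upper side, apply the level-$\gamma$ bound to super-level sets of $\eta$ against the fresh field $\eta'$: any $v$ with $\eta_{n,v}\ge\gamma m_n$ has $\eta'_{n,v}\le\sqrt{1-\gamma^2}m_n(1+o(1))$, hence $\tilde\eta^{\alpha'}_{n,v}\le\phi(\gamma)m_n(1+o(1))$ with $\phi(\gamma)=\alpha'\gamma+\sqrt{1-\alpha'^2}\sqrt{1-\gamma^2}$; the identity $(1-\alpha'\gamma)^2-(1-\alpha'^2)(1-\gamma^2)=(\gamma-\alpha')^2$ shows $\phi$ is uniquely maximized at $\gamma=\alpha'$ with $\phi(\alpha')=1$, so points with $\eta\ge(\alpha'+\delta)m_n$ cannot reach the near-maximal value $\sup_v\tilde\eta^{\alpha'}\approx m_n$. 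Symmetrically, applying the level bound to super-level sets of $\eta'$ against fresh $\eta$ controls $\sup_{\{\eta<(\alpha'-\delta)m_n\}}\tilde\eta^{\alpha'}$ by $\phi(\alpha'-\delta)m_n<m_n$. Thus $v^\ast$ lies in the band $\eta_{n,v^\ast}\in[(\alpha'-\delta)m_n,(\alpha'+\delta)m_n]=[\alpha m_n,(\alpha+2\delta)m_n]$; in particular $v^\ast\in U_{n,\alpha}$, and from $\tilde\eta^{\alpha'}_{n,v^\ast}\approx m_n$ together with $\eta_{n,v^\ast}\le(\alpha'+\delta)m_n$ I read off $\eta'_{n,v^\ast}\ge(\sqrt{1-\alpha^2}-O(\delta))m_n$. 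Hence $\sup_{U_{n,\alpha}}\eta'\ge\eta'_{n,v^\ast}\ge(\sqrt{1-\alpha^2}-O(\delta))m_n$, and concentration of $\sup_{U_{n,\alpha}}\eta'$ around $g_n(U_{n,\alpha})$ promotes this to $g_n(U_{n,\alpha})\ge(\sqrt{1-\alpha^2}-O(\delta))m_n$ with probability $\to1$; sending $\delta\downarrow0$ finishes.

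\emph{Main obstacle.} The difficulty is entirely in the lower bound, namely the two-sided localization showing that the rotated field splits its budget across the two copies in the exact proportion $(\alpha',\sqrt{1-\alpha'^2})$; this is what pins the constant to $\sqrt{1-\alpha^2}$ rather than anything larger. It is delicate precisely at the threshold, since $\phi$ has only a \emph{quadratic} maximum there, so the level-covering gives no gap exactly at $\gamma=\alpha$; inflating the angle to $\alpha'=\alpha+\delta$ (and then $\delta\downarrow0$) is the device that converts soft localization into the hard membership $v^\ast\in U_{n,\alpha}$. The argument leans on non-degeneracy twice — to make every Borell--TIS fluctuation $o(m_n)$ and to guarantee $\sup_v\tilde\eta^{\alpha'}\approx m_n$ — and it bootstraps the lower bound out of the (easy) family of upper bounds at all levels, crucially using the independence of the two copies.
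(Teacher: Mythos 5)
Your argument is correct and follows essentially the same route as the paper: the rotation identity $\{\eta_{n,v}\}\stackrel{law}{=}\{\gamma\bar\eta_{n,v}+\sqrt{1-\gamma^2}\,\tilde\eta_{n,v}\}$ with two independent copies, conditional Borell--TIS concentration to replace suprema of the fresh copy over $\eta$-measurable level sets by their $g_n$-values, the upper bound at every level $t$ feeding into the lower bound via $\alpha t+\sqrt{1-\alpha^2}\sqrt{1-t^2}\le 1$ with unique maximizer $t=\alpha$, and the small angle shift $\alpha\mapsto\alpha+\delta$ to turn approximate localization into membership in $U_{n,\alpha}$. The only cosmetic difference is that you localize the maximizer of the rotated field pointwise where the paper decomposes its supremum over bands of the first copy and extracts the bound algebraically; these are the same argument.
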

\begin{remark}The connection between cover times and 
the extreme height squared 
of Gaussian free fields, hinging on Dynkin's isomorphism theorem \cite{Dynkin83, EKMRS00, MR92}) is  well-understood (see \cite{Ding11b, DLP10}). From this
perspective, Theorem~\ref{thm-1} is the analog of the following fact about cover times of random walks: 
On large finite graph with cover time $t_{\mathrm{cov}}$ (which is substantially larger than the maximal hitting time), run random walk $\alpha t_{\mathrm{cov}}$ steps, 
for some fixed $0<\alpha<1$. Then, by the additive structure of the random walk and concentration of the cover time around its mean (cf. \cite{Aldous91b}), it is not hard to show that 
the uncovered set by time $\alpha t_{\mathrm{cov}}$ has cover time close to $(1-\alpha) t_{\mathrm{cov}}$,
with high probability.  
\end{remark}

We provide next few applications of Theorem~\ref{thm-1}
in the context of Gaussian fields which upon proper normalization satisfy
\begin{equation}\label{eq-Gaussian-nice}
\var (\eta_{n, v}) \leq n \mbox{ for all } v\in V_n \mbox{ and } n^{-1} \log |V_n| \to  \log \lambda 
\mbox{ for fixed } \lambda > 1
\end{equation}
(indeed, our primary interest is in Gaussian fields 
having most of these variances close to $n$). Since 
\beq\label{eq:basic-bd}
g_n(S) \leq a_n + \sum_{v\in S} \int_0^\infty \P(\eta_{n, v} \geq a_n+x)dx \quad \mbox { for any } a_n \geq 0 
\quad \mbox {and all } \quad S \subseteq V_n \,,
\eeq
considering $a_n = \sqrt{2 n \log |S|}$ it follows from
our assumption \eqref{eq-Gaussian-nice} by elementary
Gaussian tail estimates, that
\begin{equation}\label{eq-union-bound}
g_n(S) \leq \sqrt{2 n \log |S|} + O(1)\,.
\end{equation}
Combining (\ref{eq-union-bound}) with Theorem~\ref{thm-1}, we get that
\begin{coro}\label{cor-multiple-vallyes}
For a sequence of Gaussian fields $\{\eta_{n, v}: v\in V_n\}$ satisfying \eqref{eq-Gaussian-nice} and 
any fixed $\beta>0$, there exists $c=c(\lambda, \beta)>0$ such that with probability tending to one as $n\to \infty$, we have
\beq\label{eq:large-level-set}
|\{v\in V_n: \eta_{n, v} \geq g_n(V_n) - \beta n\}| \geq \mathrm{e}^{cn}\,.
\eeq
\end{coro}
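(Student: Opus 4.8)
The plan is to deduce the corollary from Theorem~\ref{thm-1} and the union bound \eqref{eq-union-bound}, after a short reduction and a split into two regimes determined by the \emph{deterministic} quantity $g_n(V_n)$. First, the set in \eqref{eq:large-level-set} is non-decreasing in $\beta$, so it suffices to treat arbitrarily small $\beta>0$. Applying \eqref{eq-union-bound} with $S=V_n$ together with $\log|V_n|=(1+o(1))\,n\log\lambda$ gives $g_n(V_n)\le (1+o(1))\sqrt{2\log\lambda}\,n$; writing $C=\sqrt{2\log\lambda}$, I fix $\beta<2C$ and set $\alpha=1-\tfrac{\beta}{2C}\in(0,1)$. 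I then compare $g_n(V_n)$ with $\tfrac{\beta n}{2}$.

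In the regime $g_n(V_n)\le\tfrac{\beta n}{2}$ the threshold $g_n(V_n)-\beta n$ is at most $-\tfrac{\beta n}{2}$, so the set in \eqref{eq:large-level-set} contains $W_n:=\{v\in V_n:\eta_{n,v}\ge -\tfrac{\beta n}{2}\}$. Since each $\eta_{n,v}$ is centered with $\var(\eta_{n,v})\le n$, the Gaussian tail bound gives $\P(\eta_{n,v}<-\tfrac{\beta n}{2})\le \exp(-\beta^2 n/8)$, whence $\E[\,|V_n|-|W_n|\,]\le |V_n|\exp(-\beta^2 n/8)$. A Markov bound applied to the nonnegative count $|V_n|-|W_n|$ yields $|W_n|\ge |V_n|/2$ with probability at least $1-2\exp(-\beta^2 n/8)\to 1$, so \eqref{eq:large-level-set} holds with room to spare, for any $c<\log\lambda$. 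Note this regime uses neither non-degeneracy nor Theorem~\ref{thm-1}.

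In the complementary regime $g_n(V_n)>\tfrac{\beta n}{2}$ I would run the genuine level-set argument. Here $g_n(V_n)\ge\tfrac{\beta}{2}n\gg\sigma_n$, so the fields are non-degenerate along this (deterministic) sub-sequence and Theorem~\ref{thm-1} applies along it. By the choice of $\alpha$ and $g_n(V_n)\le (1+o(1))Cn$ one has $(1-\alpha)g_n(V_n)=\tfrac{\beta}{2C}g_n(V_n)\le \tfrac{\beta n}{2}(1+o(1))\le\beta n$ for large $n$, so $\alpha g_n(V_n)\ge g_n(V_n)-\beta n$ and hence $U_{n,\alpha}$ is contained in the set of \eqref{eq:large-level-set}; it thus suffices to lower bound $|U_{n,\alpha}|$. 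Theorem~\ref{thm-1} gives $g_n(U_{n,\alpha})\ge\tfrac12\sqrt{1-\alpha^2}\,g_n(V_n)$ with high probability, and feeding this into \eqref{eq-union-bound} with $S=U_{n,\alpha}$ yields $\log|U_{n,\alpha}|\ge \tfrac{(g_n(U_{n,\alpha})-O(1))^2}{2n}\ge \tfrac{(1-\alpha^2)\,g_n(V_n)^2}{8n}(1-o(1))$, which is at least $cn$ because $g_n(V_n)\ge\tfrac{\beta}{2}n$. Taking $c=c(\lambda,\beta)>0$ to be the smaller of the exponents from the two regimes completes the argument.

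The only genuinely nontrivial input is Theorem~\ref{thm-1}; everything else is the mechanism converting its conclusion into a counting statement. The point requiring care is the final display: a lower bound of order $n$ on $g_n(U_{n,\alpha})$ forces $|U_{n,\alpha}|$ to be exponentially large precisely because \eqref{eq-union-bound} caps $g_n$ of a set of size $N$ at $\sqrt{2n\log N}+O(1)$. This is exactly why it is essential to be in the regime $g_n(V_n)=\Theta(n)$, which the case hypothesis $g_n(V_n)>\tfrac{\beta n}{2}$ guarantees; without it Theorem~\ref{thm-1} would return only a sub-exponential count, and the complementary small-$g_n$ regime must instead be handled by the direct mean-zero tail estimate of the second paragraph.
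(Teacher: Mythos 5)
Your proof is correct and follows essentially the same route as the paper: a case split on whether $g_n(V_n)$ is of order $\beta n$, with the small case handled directly and the main case via Theorem~\ref{thm-1} applied with a suitable $\alpha$ (so that $U_{n,\alpha}$ sits inside the target set), followed by the cap \eqref{eq-union-bound} to convert the lower bound on $g_n(U_{n,\alpha})$ into an exponential cardinality bound. The only cosmetic difference is that in the small-$g_n(V_n)$ regime you use a first-moment/union bound on individual Gaussian tails where the paper invokes \eqref{eq:borel} for $\{-\eta_{n,v}\}$; both work.
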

\begin{proof} Recall the Gaussian concentration inequality  
of Sudakov-Tsirelson \cite{ST74} and Borell \cite{Borell75}
\beq\label{eq:borel}
\P(|\sup_{v\in S_n} \{\eta_{n, v}\} - g_n(S_n)| \geq z)
\leq 2 \mathrm{e}^{-\frac{z^2}{2\sigma_n^2}}
\quad \mbox{ for all } z \geq 0
\eeq
(e.g., \cite[Thm. 7.1, Eq. (7.4)]{Ledoux89}), and
set $S_n \deq \{v\in V_n: \eta_{n, v} \geq g_n(V_n) - \beta n\}$ for Gaussian fields $\{\eta_{n,v}\}$ satisfying 
\eqref{eq-Gaussian-nice}. In case
$n^{-1} g_n(V_n) \leq \frac{\beta}{3}$, 
we get by the symmetry of the Gaussian law, upon 
considering \eqref{eq:borel} for $\{-\eta_{n,v}\}$
and $z=(\beta n - 2 g_n(V_n))_+$ that $S_n=V_n$
with probability tending to $1$ in $n$, so
(\ref{eq:large-level-set}) 
trivially holds. Assuming otherwise, without 
loss of generality we pass to a sub-sequence such 
that $n^{-1} g_n(V_n) \ge \frac{\beta}{3}$ for all $n$,
in which case the fields are non-degenerate and
with $S_n \supseteq U_{n,\alpha}$ for any
$\alpha \ge 1-\frac{\beta n}{3 g_n(V_n)}$ we have
by Theorem~\ref{thm-1} that 
$g_n(S_n) \geq (1-\alpha) g_n(V_n) \ge \frac{\beta n}{4}$ 
with probability tending to $1$ in $n$.  
Combining this with the upper bound of 
\eqref{eq-union-bound} completes the proof. 
\end{proof}

Recall the notion of multiple-valleys, a phenomenon of  interest in the study of spin glasses,
which is defined as follows (cf. \cite{Chatterjee08, Chatterjee09}).
\begin{definition}\label{dfn:mult-val}
A sequence of Gaussian field satisfying 
\eqref{eq-Gaussian-nice} exhibits multiple valleys if 
for any $\delta, \epsilon>0$, there exist $c=c(\delta, \epsilon, \lambda)>0$ and $W_n \subseteq V_n$ such that:
\begin{enumerate}[(a)]
\item $|W_n| \geq \mathrm{e}^{cn}$.
\item $\E (\eta_{n, v} \eta_{n, u}) \leq \epsilon \sqrt{\var( \eta_{n, v}) \var (\eta_{n, u})}$ for all $u, v\in W_n$. \label{item-orthogonal}
\item $\eta_{n, v} \geq g_n(V_n) - \delta n$ for all $v\in W_n$.
\end{enumerate}
\end{definition}

Corollary~\ref{cor-multiple-vallyes} tells us that any
sequence of Gaussian fields satisfying 
\eqref{eq-Gaussian-nice} shall exhibit sufficiently 
many high points to induce multiple valleys 
in the presence of the approximate orthogonality Condition~\eqref{item-orthogonal}. We note in passing
that \cite{Chatterjee09} shows that the SK spin-glass 
model exhibits a \emph{weak multiple valleys}  
phenomenon where the size of such $W_n$ grows 
as $(\log n)^{1/8}$ (as opposed to the exponential 
of $n$ size required in Definition \ref{dfn:mult-val}).

\begin{remark} 
Our assumption (\ref{eq-Gaussian-nice}) applies for 
the Gaussian fields
$\eta_{n, v} = \sum_{i=1}^n v_i Z_i$, where $\{Z_i\}$
are i.i.d.\ standard Gaussian variables and 
$v = (v_i)_i \in V_n \deq \{-1, 1\}^n$.
The maximum of such a Gaussian field is clearly  
$M_n \deq \sum_{i=1}^{n} |Z_i|$, 
achieved at $v^* = (\mathrm{sgn}(Z_i))_i$. Moreover, 
there exist fixed $\epsilon_0, \delta_0>0$ 
such that with high probability (as $n \to \infty$),
for any $v$ such that $\sum_i v_i v_i^* \le \epsilon_0 n$, 
we have $\eta_{n,v} \leq (1-\delta_0) M_n$.
\end{remark}

In view of the preceding example, 
additional structure is required for assuring
the orthogonality condition in the definition 
of multiple valleys. However, as we next show
this does apply for \emph{all extremal} 
Gaussian fields (as defined in \cite{Chatterjee08}).
That is, those 
(non-degenerate) Gaussian fields 
satisfying \eqref{eq-Gaussian-nice}, for which also
\beq\label{dfn:extremal}
\lim_{n\to \infty} \left\{\frac{g_n(V_n)}{n \sqrt{2 \log \lambda}}\right\} = 1\,.
\eeq
\begin{theorem}\label{thm-extremal}
Any sequence of extremal Gaussian fields 
$\{\eta_{n, v}: v\in V_n\}$ exhibits
multiple valleys as in Definition 
\ref{dfn:mult-val}, and moreover has level sets 
such that 
\beq\label{eq:level-set-size}
\lim_{n\to \infty} \frac{\log |U_{n, \alpha}|}{\log |V_n|}
 = 1-\alpha^2 \quad \mbox{ in probability, }
\quad \mbox{ for any fixed } \quad 0<\alpha<1.
\eeq
\end{theorem}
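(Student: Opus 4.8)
The plan is to establish the level-set asymptotics \eqref{eq:level-set-size} first, and then to deduce the multiple-valley property from it together with Theorem~\ref{thm-1} and Corollary~\ref{cor-multiple-vallyes}.

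\emph{Level-set sizes.} For the upper bound on $|U_{n,\alpha}|$ I would use a first-moment estimate: since each $\eta_{n,v}$ is centred Gaussian with $\var(\eta_{n,v})\leq n$, the Gaussian tail bound gives $\P(\eta_{n,v}\geq\alpha g_n(V_n))\leq\exp(-\alpha^2 g_n(V_n)^2/(2n))$, so that $\E|U_{n,\alpha}|\leq|V_n|\exp(-\alpha^2 g_n(V_n)^2/(2n))$. Inserting the extremal value $g_n(V_n)=(1+o(1))n\sqrt{2\log\lambda}$ from \eqref{dfn:extremal} together with $\log|V_n|=(1+o(1))n\log\lambda$ yields $\E|U_{n,\alpha}|\leq\lambda^{(1-\alpha^2)n(1+o(1))}$, and Markov's inequality turns this into $\log|U_{n,\alpha}|/\log|V_n|\leq(1-\alpha^2)(1+o(1))$ with high probability. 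For the matching lower bound I would apply the union bound \eqref{eq-union-bound} to the \emph{random} set $U_{n,\alpha}$, giving $g_n(U_{n,\alpha})\leq\sqrt{2n\log|U_{n,\alpha}|}+O(1)$ on each realization; since Theorem~\ref{thm-1} gives $g_n(U_{n,\alpha})\geq(\sqrt{1-\alpha^2}-o(1))g_n(V_n)$ in probability, solving for $\log|U_{n,\alpha}|$ and again using extremality produces $\log|U_{n,\alpha}|/\log|V_n|\geq(1-\alpha^2)(1-o(1))$. Combining the two bounds gives \eqref{eq:level-set-size}.

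\emph{Reduction for the multiple valleys.} Conditions (a) and (c) of Definition~\ref{dfn:mult-val} are exactly Corollary~\ref{cor-multiple-vallyes}: taking $\delta$ in place of $\beta$ produces a random set $S_n=\{v:\eta_{n,v}\geq g_n(V_n)-\delta n\}$ with $|S_n|\geq e^{c_0 n}$ for some $c_0=c_0(\delta,\lambda)>0$ with high probability, and $S_n=U_{n,\alpha}$ for $\alpha=1-\delta/(g_n(V_n)/n)=1-\delta/\sqrt{2\log\lambda}+o(1)$. It therefore remains only to extract from $S_n$ a still-exponential subset $W_n$ satisfying the orthogonality condition (b).

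\emph{Key estimate.} The engine for (b) is a Sudakov--Fernique comparison: if $W'\subseteq V_n$ is a clique for the correlation relation, i.e.\ $\E(\eta_{n,u}\eta_{n,v})\geq\epsilon\sqrt{\var(\eta_{n,u})\var(\eta_{n,v})}$ for all $u,v\in W'$, then comparing increments with the equicorrelated reference field $\xi_v=\sqrt{\epsilon}\,\zeta_0+\sqrt{1-\epsilon}\,\zeta_v$ (with $\zeta_0,\zeta_v$ independent $N(0,n)$) shows $\E(\eta_{n,u}-\eta_{n,v})^2\leq 2n(1-\epsilon)=\E(\xi_u-\xi_v)^2$, whence $g_n(W')\leq\sqrt{1-\epsilon}\,\sqrt{2n\log|W'|}$; a correlated cluster thus has its expected maximum suppressed by the definite factor $\sqrt{1-\epsilon}$ below the union bound. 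On the other hand, the level-set asymptotics \eqref{eq:level-set-size} just established, combined with Theorem~\ref{thm-1}, shows that the level set is self-extremal, $g_n(U_{n,\alpha})=(1+o(1))\sqrt{2n\log|U_{n,\alpha}|}$, i.e.\ it attains the union bound with constant one. If $U_{n,\alpha}$ could be covered by $k$ correlated cliques, then splitting the maximum over the clusters and applying the suppression bound on each gives $g_n(U_{n,\alpha})\leq\sqrt{1-\epsilon}\,\sqrt{2n\log|U_{n,\alpha}|}+\sqrt{2n\log k}$; comparing with self-extremality forces $k\geq|U_{n,\alpha}|^{(1-\sqrt{1-\epsilon})^2(1-o(1))}=e^{cn}$. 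So exponentially many correlated clusters are needed to cover the high points — near the top the field has exponentially many essentially distinct directions.

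\emph{Main obstacle.} The remaining, and genuinely delicate, step is to convert this cluster-covering lower bound into an honest pairwise-orthogonal family: to pass from ``the clique-cover number of the correlation graph is exponential'' to ``its independence number is exponential.'' These two quantities are not comparable for general graphs, so the argument must exploit the geometric structure. The vectors $a_v$ representing $\{\eta_{n,v}\}$ in the Gaussian Hilbert space lie on the sphere of radius $\leq\sqrt n$, a correlated clique lies in a spherical cap of angular radius $\arccos\epsilon<\pi/2$, and extremality — maximality of the Gaussian mean width $g_n(V_n)$ — should force the high-point directions to spread over exponentially many nearly orthogonal directions. I expect the crux of the proof to be a packing estimate in this Hilbert space, bounding how many high points fall into a single cap and then selecting one representative per cap, so that the chosen representatives are simultaneously pairwise $\epsilon$-orthogonal and exponentially numerous. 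Choosing $\delta$ small relative to $\epsilon$ then secures conditions (a), (b) and (c) of Definition~\ref{dfn:mult-val} at once.
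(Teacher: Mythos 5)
Your treatment of \eqref{eq:level-set-size} is exactly the paper's: first moment plus Gaussian tails for the upper bound, and Theorem~\ref{thm-1} combined with \eqref{eq-union-bound} applied to the random set $U_{n,\alpha}$ for the lower bound. The multiple-valley part, however, stops short of a proof at precisely the step you flag as the ``main obstacle,'' and that step is the whole content of the paper's Lemma~\ref{lem-mult-val}. Your suppression estimate is the right engine, but you deploy it on \emph{cliques} of the correlation graph and then face the genuine non-implication from ``exponential clique cover number'' to ``exponential independence number''; the speculative cap-packing argument you sketch is not supplied and is not what is needed.

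The missing idea is to cover by \emph{stars} rather than cliques, centered at a \emph{maximal} pairwise-uncorrelated set. Let $W_n$ be a maximal subset of (the index set of) the field with $\E(\eta_{n,u}\eta_{n,u'}) < \epsilon\sqrt{\var(\eta_{n,u})\var(\eta_{n,u'})}$ for distinct $u,u'\in W_n$; then condition~(b) holds for $W_n$ by construction, and by maximality every point lies in some ball $B_n(u,\epsilon)=\{v:\E(\eta_{n,u}\eta_{n,v})\ge\epsilon\sqrt{\var\var}\}$ with $u\in W_n$, so these $|W_n|$ balls cover everything. The point is that the suppression bound holds for such a ball even though its elements need not be pairwise correlated: regressing out the common center, $\tilde\eta_{n,u}=\eta_{n,u}-\rho_{n,u}\eta_{n,v}$, leaves a field of variance at most $(1-\epsilon^2)n$, whence $g_n(B_n(v,\epsilon))\le\sqrt{1-\epsilon^2}\sqrt{2\log\lambda}(1+\delta)n$. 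Feeding this into $g_n(V_n)\le\sup_v g_n(B_n(v,\epsilon))+\E[\sup_{u\in W_n}X_{n,u}]\le\sqrt{1-\epsilon^2}\sqrt{2\log\lambda}(1+\delta)n+\sqrt{2n\log|W_n|}+O(1)$ and comparing with extremality \eqref{dfn:extremal} forces $|W_n|\ge e^{cn}$ directly --- no passage from covering number to independence number is required, because the maximal net is simultaneously an independent set and the index set of the cover. (Your Sudakov--Fernique increment bound $\E(\eta_{n,u}-\eta_{n,v})^2\le 2n(1-\epsilon)$ also silently assumes both variances are close to $n$; the regression argument avoids this.) Finally, to secure condition~(c) one applies this lemma not to $V_n$ but to an independent copy of the field restricted to $U_{n,\alpha}$ with $\alpha$ close to $1$, which your level-set asymptotics show is itself extremal; your reduction via Corollary~\ref{cor-multiple-vallyes} alone does not produce the orthogonal subfamily.
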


\begin{remark} Log-correlated Gaussian fields, 
including the two-dimensional Discrete 
Gaussian Free Field (\abbr{DGFF}, 
see \cite{BDG01}), are extremal Gaussian fields. 
For \abbr{DGFF} the asymptotics 
\eqref{eq:level-set-size} is derived
in \cite{Daviaud06}, with 
\cite{Chatterjee08} proving
the existence of weak multiple valleys 
having $|W_n|$ polynomial 
in $\log n$. 
\end{remark}

We conclude with three open problems of interest: 

\medskip
\noindent
$\bullet$ Find a necessary and sufficient condition 
for a sequence of Gaussian fields to be extremal. 

\medskip
\noindent
$\bullet$ Find an explicit condition on the 
covariance matrices which is equivalent to 
the corresponding Gaussian fields exhibiting 
multiple valleys.

\medskip
\noindent
$\bullet$ Extend our results to some non-Gaussian 
fields having tail and correlation structure similar
to those of \abbr{DGFF} or some other sequence
of extremal Gaussian fields.

\section{Proofs}

\begin{proof}[Proof of Theorem~\ref{thm-1}]
Let $\{\bar \eta_{n, v}: v\in V_n\}$ and $\{\tilde \eta_{n, v}: v\in V_n\}$ be two independent copies of the Gaussian field $\{\eta_{n, v}: v\in V_n\}$. Then, 
for any $0<\gamma < 1$,
\begin{equation}\label{eq-identity-in-law}
\{\eta_{n, v}: v\in V_n\}  \stackrel{law}{=} \{\gamma \bar \eta_{n, v} + \sqrt{1 -\gamma^2} \tilde \eta_{n, v} : v\in V_n\}\,.
\end{equation}
Let $\bar U_{n, t} = \{v\in V_n: \bar \eta_{n, v} \geq t g_n(V_n)\}$ for $0<t<1$ (so, clearly 
$\bar U_{n, t} \stackrel{law}{=} U_{n, t}$). Considering
\eqref{eq-identity-in-law} for $\gamma=t$ we get that
\beq
\sup_{v\in V_n} \{\eta_{n, v} \}\succeq t^2 g_n(V_n) {\bf 1}_{\{\bar U_{n, t} \neq \emptyset\}} + \sqrt{1 - t^2} \sup_{v\in \bar U_{n,t}} \{\tilde \eta_{n, v}\}\,,
\eeq
where $X \succeq Y$ means that $X$ stochastically 
dominates $Y$. Since the sequence of Gaussian fields is non-degenerate, by \eqref{eq:borel} we have that $g_n(V_n)^{-1} \sup_{v\in V_n} \{\eta_{n, v}\} \to 1$ in probability, hence
$\P(\bar U_{n, t} \neq \emptyset) \to 1$. Conditional on
$\bar U_{n,t}$, by the independence of
$\{\bar \eta_{n,v} \}$ and $\{ \tilde \eta_{n,v} \}$,
it further follows from \eqref{eq:borel} that 
\beq\label{eq:unif-tight}
\sigma_n^{-1} \Big(
\sup_{v\in \bar U_{n t}} \{\tilde \eta_{n, v} \} 
- g_n(\bar U_{n, t}) \Big)
\quad \mbox{ is a uniformly tight sequence. } 
\eeq
Therefore, for any fixed $\epsilon>0$ 
\begin{equation}\label{eq-upper-bound}
\limsup_{n\to \infty} \P\Big(g_n (\bar U_{n, t}) \geq (\sqrt{1 - t^2} + \epsilon)g_n(V_n)\Big) = 0\,,
\end{equation}
which upon taking $\epsilon \downarrow 0$ 
yields the upper bound in \eqref{eq:sup-level}. 
Next, fixing $\epsilon>0$ and considering 
\eqref{eq-identity-in-law} for $\gamma = \alpha$,
we get that
\begin{equation}\label{eq-domination}
\sup_{v\in V_n} \{\eta_{n, v} \} \preceq  \max_{t/\epsilon\in \{0, 1, \ldots, [1/\epsilon]-1\}} 
\Big\{ t \alpha g_n(V_n) 
+ \sqrt{1 - \alpha^2} \sup_{v\in \bar U_{n, t}} \{\tilde \eta_{n, v}\} + \epsilon g_n(V_n) \Big\}\,.\end{equation}
Further, per fixed $\{\bar \eta_{n,v}\}$ 
(and hence $\bar U_{n,t}$), we have by 
\eqref{eq:unif-tight} upon applying 
\eqref{eq:borel} for the Gaussian 
field on the LHS of \eqref{eq-domination}, 
that with probability tending to 1 as $n\to \infty$,
\beq\label{eq:t-alpha-bd}
g_n(V_n) \leq \max_{t/\epsilon \in \{0, 1, \ldots, [1/\epsilon]-1\}} \Big\{ t\alpha g_n(V_n) + \sqrt{1 - \alpha^2} g_n(\bar U_{n, t}) + 2 \epsilon g_n(V_n)\Big\}\,.
\eeq
Note that $h(t,\alpha) \deq \alpha t + \sqrt{1-\alpha^2}
\sqrt{1-t^2} \le 1$ for all $t,\alpha \in [0,1]$, with 
a strict inequality whenever $t \ne \alpha$. Hence, in
view of \eqref{eq:t-alpha-bd} and \eqref{eq-upper-bound}, 
there exist non-random $\delta_\epsilon \to 0$ as $\epsilon \to 0$, such that with 
probability tending to 1 as $n\to \infty$
$$
g_n(V_n) \leq \max_{t/\epsilon \in \{0, 1, \ldots, [1/\epsilon]-1\}, |t-\alpha| \leq \delta_\epsilon} \Big\{
t\alpha g_n(V_n) + \sqrt{1 - \alpha^2} g_n(\bar U_{n, t}) + 2 \epsilon g_n(V_n) \Big\}\,.
$$
That is to say, setting 
$\psi_\epsilon(\alpha) \deq (1-\alpha^2)^{-1/2} 
(1-\alpha^2 - 2 \epsilon - \alpha \delta_\epsilon)$, 
we have for any fixed $\epsilon >0$
\begin{equation}\label{eq-amir}
\lim_{n\to \infty}\P \Big(g_n(\bar U_{n, \alpha - \delta_\epsilon}) \leq \psi_\epsilon (\alpha) g_n(V_n) \Big) = 0\,.\end{equation}
Considering \eqref{eq-amir} with $\alpha$ replaced by $\alpha+\delta_\epsilon$ we get that and all  
$\epsilon < \epsilon_0(\alpha)$,
$$
\lim_{n\to \infty}\P\Big(g_n(\bar U_{n, \alpha}) \leq \psi_\epsilon(\alpha + \delta_\epsilon) g_n(V_n) \Big) = 0
\,,
$$
yielding the lower bound 
in \eqref{eq:sup-level} since 
$\delta_\epsilon \to 0$ and
$\psi_\epsilon(\alpha) \to \psi_0(\alpha)
=\sqrt{1-\alpha^2}$ when $\epsilon \to 0$.
\end{proof}
\begin{proof}[Proof of Theorem~\ref{thm-extremal}] By Markov's inequality, the upper bound in 
\eqref{eq:level-set-size} is 
a straightforward consequence of 
$$
\E \big[ |U_{n, \alpha}| \big] = \sum_{v \in V_n}  
\P(\eta_{n, v} \geq \alpha g_n(V_n)) 
\leq \lambda^{(1+\delta-\alpha^2)n},
$$
which by \eqref{dfn:extremal} and standard 
Gaussian tail bounds, holds for any $\delta>0$ 
and all $n$ large enough. For the corresponding 
lower bound we merely combine \eqref{eq:sup-level} of 
Theorem~\ref{thm-1}, with \eqref{eq-union-bound} 
in case of $S = U_{n,\alpha}$. This further implies that
with high probability, an independent copy of the 
Gaussian field $\{\eta_{n,v}\}$ restricted to the 
(random) subset $U_{n, \alpha}$ is also an 
extremal \emph{Gaussian} field. Applying Lemma
\ref{lem-mult-val} for the latter extremal fields
results with existence of multiple valleys for 
the original fields $\{\eta_{n,v}: v \in V_n\}$.
\end{proof}

\begin{lemma}\label{lem-mult-val}
For any sequence of extremal Gaussian fields $\{\eta_{n, v}: v\in V_n\}$ and $\epsilon>0$, there exist
$c = c(\lambda, \epsilon)$ and $W_n \subseteq V_n$ 
such that $|W_n| \geq \mathrm{e}^{cn}$ and 
$\E (\eta_{n, u} \eta_{n, v}) \leq \epsilon 
\sqrt{\var(\eta_{n, v}) \var (\eta_{n, u})}
$ for all $u, v\in W_n$.
 \end{lemma}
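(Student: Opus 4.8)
The plan is to argue by contradiction, using a \emph{maximal} approximately-orthogonal set as an efficient cover of $V_n$ and then showing that the existence of such a small cover would force $g_n(V_n)$ below its extremal value. Fix $\epsilon>0$ and let $W_n=\{w_1,\dots,w_m\}$ be a maximal subset of $V_n$ on which all pairwise normalized correlations $\rho_{uv}\deq\E(\eta_{n,u}\eta_{n,v})/\sqrt{\var(\eta_{n,u})\var(\eta_{n,v})}$ are at most $\epsilon$. This $W_n$ is exactly the object we want, so it suffices to rule out $m<\mathrm{e}^{cn}$ for a suitable $c=c(\lambda,\epsilon)>0$. By maximality, every $v\in V_n$ satisfies $\rho_{v w_i}>\epsilon$ for some $i$, so $V_n=\bigcup_{i=1}^m B_{w_i}$ where $B_w\deq\{v\in V_n:\rho_{vw}>\epsilon\}$.

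The heart of the argument is a bound on $\E[\sup_{v\in B_w}\eta_{n,v}]$ that beats $g_n(V_n)$ by a genuine constant factor, obtained by projecting out the single common direction $\eta_{n,w}$. For $v\in B_w$ write $\eta_{n,v}=\beta_v\eta_{n,w}+\zeta_v$ with $\beta_v=\E(\eta_{n,v}\eta_{n,w})/\var(\eta_{n,w})>0$ and $\zeta_v\perp\eta_{n,w}$. The common-direction term costs only $O(\sqrt n)$: since $\beta_v\le\sqrt n/\sqrt{\var(\eta_{n,w})}$ by $\var(\eta_{n,v})\le n$ from \eqref{eq-Gaussian-nice}, one gets $\E[\sup_{v\in B_w}\beta_v\eta_{n,w}]\le(\sqrt n/\sqrt{\var(\eta_{n,w})})\,\E[\eta_{n,w}^+]=\sqrt{n/(2\pi)}$, the variance of $w$ cancelling. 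The residual field has strictly reduced variance $\var(\zeta_v)=\var(\eta_{n,v})(1-\rho_{vw}^2)<n(1-\epsilon^2)$ because $\rho_{vw}>\epsilon$ on $B_w$, so the union bound \eqref{eq-union-bound} applied to $\{\zeta_v\}$ gives $\E[\sup_{v\in B_w}\zeta_v]\le\sqrt{1-\epsilon^2}\,\sqrt{2n\log|B_w|}+O(1)$. As $|B_w|\le|V_n|=\lambda^{(1+o(1))n}$, combining the two pieces yields
\[
\E\big[\sup_{v\in B_w}\eta_{n,v}\big]\le\sqrt{1-\epsilon^2}\,n\sqrt{2\log\lambda}\,(1+o(1))+O(\sqrt n)\,.
\]

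It remains to pass from the per-ball estimate to $g_n(V_n)=\E[\max_i\sup_{v\in B_{w_i}}\eta_{n,v}]$. Each $X_i\deq\sup_{v\in B_{w_i}}\eta_{n,v}$ concentrates around its mean with sub-Gaussian proxy $\sigma_n^2\le n$ by \eqref{eq:borel}, so a union bound over the $m$ balls gives $\E[\max_i X_i]\le\max_i\E[X_i]+C\sqrt{n\log m}$ for an absolute constant $C$. Under the contradiction hypothesis $\log m<cn$ this is at most $\sqrt{1-\epsilon^2}\,n\sqrt{2\log\lambda}\,(1+o(1))+C\sqrt c\,n+o(n)$. Comparing with the extremal lower bound $g_n(V_n)=(1+o(1))\,n\sqrt{2\log\lambda}$ from \eqref{dfn:extremal}, dividing by $n$ and letting $n\to\infty$ gives $(1-\sqrt{1-\epsilon^2})\sqrt{2\log\lambda}\le C\sqrt c$, which is false once $c<\big[(1-\sqrt{1-\epsilon^2})\sqrt{2\log\lambda}/C\big]^2$. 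This contradiction forces $m\ge\mathrm{e}^{cn}$ for such $c=c(\lambda,\epsilon)$, completing the proof.

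I expect the main obstacle to be the clean per-ball estimate: one must check that projecting out the common direction $\eta_{n,w}$ simultaneously removes only an $O(\sqrt n)$ contribution (via the cancellation of $\var(\eta_{n,w})$ in $\beta_v\,\E[\eta_{n,w}^+]$) and genuinely shrinks the residual variance to $n(1-\epsilon^2)$, so that the gap factor $\sqrt{1-\epsilon^2}<1$ survives the union over $\mathrm{e}^{cn}$ balls, which costs only $C\sqrt c\,n$. Tracking the $o(n)$ and $O(\sqrt n)$ terms uniformly across balls, and confirming that it is precisely the extremality hypothesis \eqref{dfn:extremal} that supplies the matching lower bound on $g_n(V_n)$, is where the care is needed.
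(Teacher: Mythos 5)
Your proposal is correct and follows essentially the same route as the paper: a maximal pairwise-$\epsilon$-orthogonal set whose correlation balls cover $V_n$, the regression decomposition $\eta_{n,v}=\beta_v\eta_{n,w}+\zeta_v$ giving a per-ball bound $\sqrt{1-\epsilon^2}\,n\sqrt{2\log\lambda}(1+o(1))+O(\sqrt n)$, and then Borell concentration plus a union bound over the balls compared against the extremality hypothesis. The only differences are cosmetic (contradiction framing versus direct derivation of the lower bound on $|W_n|$, and $\E[\eta_{n,w}^+]$ in place of $\E|\eta_{n,w}|$).
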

 \begin{proof}
For each $n$ and $\epsilon>0$ let $W_n$
be a maximal $\epsilon$-net of $V_n$ based on balls 
$B_n(v, \epsilon) \deq \{u\in V_n: \E( \eta_{n, v} \eta_{n, u}) \geq \epsilon \sqrt{\var(\eta_{n, v}) \var (\eta_{n, u})}\}$.
That is,  
$V_n = \bigcup_{u \in W_n} B_n(u, \epsilon)$ and
$u' \notin B_n(u,\epsilon)$ for all $u, u' \in W_n$.  
We claim that for any $\delta>0$ and all $n$ large enough,
\begin{equation}\label{eq-Amir-coffee}
\sup_{v \in V_n} \, g_n(B_n(v, \epsilon)) 
\leq \sqrt{1 - \epsilon^2} \sqrt{2\log \lambda}  
(1+ \delta) n\,.
\end{equation}
To this end, consider the Gaussian field 
$$
\tilde \eta_{n,u} \deq 
\eta_{n,u} - \rho_{n, u} \eta_{n,v} \,,
\qquad u\in B_n(v, \epsilon)\,,
$$
where $\rho_{n, u} \deq \frac{\E (\eta_{n, u} \eta_{n, v})}{\var (\eta_{n, v})} \geq \epsilon \frac{\sqrt{\var (\eta_{n, u})}}{\sqrt{\var (\eta_{n, v})}}$, and thereby 
$$
\var (\tilde \eta_{n, u}) \leq (1 - \epsilon^2) \var (\eta_{n, u}) \leq (1-\epsilon^2) n \,.
$$
Clearly, 
\beq\label{eq:jian1}
\E \big[ \sup_{u\in B_{n}(v,\epsilon)} \{ \eta_{n,u} \} 
\big] \leq \E [\sup_{u\in B_{n}(v, \epsilon)} 
\{ \rho_{n,u} \eta_{n, v} \}] + \E [\sup_{u\in B_{n}
(v,\epsilon)} 
\{\tilde \eta_{n, v}\}]\,.
\eeq
By Cauchy-Schwartz, 
$|\rho_{n, u}| \leq \frac{\sqrt{\var \eta_{n, u}}}{\sqrt{\var \eta_{n, v}}} \leq \frac{\sqrt{n}}{\sqrt{\var \eta_{n, v}}}$, so the first term on the RHS of 
\eqref{eq:jian1} is bounded by
$\frac{\sqrt{n}}{\sqrt{\var \eta_{n, v}}} \E |\eta_{n, v}| \leq 4 \sqrt{n}$, whereas applying 
\eqref{eq-union-bound} for the Gaussian field 
$\{(1-\epsilon^2)^{-1/2} \tilde \eta_{n,u} : u\in B_n(v, \epsilon)\}$ 
bounds the second term on the RHS of \eqref{eq:jian1}
by $\sqrt{1-\epsilon^2} \sqrt{2\log \lambda} (1+\delta/2) n$ for all $n$ large enough. Altogether, this establishes
\eqref{eq-Amir-coffee}. Let now 
$$
X_{n, v} \deq 
\sup_{u\in B_n(v, \epsilon)} \{\eta_{n, u}\} 
- g_n(B_n(v, \epsilon)) \,,
$$
noting that 
by \eqref{eq-Amir-coffee} and the definition of $W_n$,
$$
g_n(V_n) \leq \sqrt{1 - \epsilon^2} \sqrt{2\log \lambda}
(1+\delta) n + \E [ \sup_{v\in W_n} \{X_{n, v}\} ]\,.
$$
By \eqref{eq:borel}, the tail of each variable $X_{n,v}$ 
is dominated by that of Gaussian with variance $n$. Hence,
using a bound of the form \eqref{eq:basic-bd} 
for $a_n = \sqrt{2 n \log |W_n|}$, we obtain that 
$$
\E [ \sup_{v\in W_n} \{ X_{n, v} \} ] \leq 
a_n + O(1)\,,
$$
from which we deduce that 
$g_n(V_n) \leq \sqrt{1 - \epsilon^2} \sqrt{2\log \lambda} (1+\delta) n + \sqrt{2 n \log |W_n|} + O(1)$. 
Contrasting this upper bound with our assumption (\ref{dfn:extremal})
yields an exponential in $n$ lower bound on $|W_n|$.
 \end{proof}

\def\cprime{$'$}

\end{document}